\documentclass{commat}

\renewcommand{\geq}{\geqslant}
\renewcommand{\leq}{\leqslant}
\renewcommand{\phi}{\varphi}
\renewcommand{\vec}[1]{\mathbf{#1}}

\newcommand{\cC}{\mathcal{C}}
\newcommand{\cL}{\mathcal{L}}
\newcommand{\cP}{\mathcal{P}}
\newcommand{\e}{\varepsilon}

\newcommand{\La}{\Lambda}
\newcommand{\Q}{\mathbb{Q}}
\newcommand{\R}{\mathbb{R}}
\newcommand{\Z}{\mathbb{Z}}

\title{%
    On triviality of uniform Diophantine exponents of lattices
    }

\author{%
    Oleg German
    }

\authorinfo{National Research University Higher School of Economics, Russia}{german.oleg@gmail.com}

\abstract{%
    In this paper we prove that uniform Diophantine exponents of lattices attain only trivial values.
    }

\keywords{%
    Lattices, Diophantine exponents, Davenport's lemma.
    }

\msc{%
    11H46
    }

\VOLUME{31}
\NUMBER{2}
\firstpage{27}
\DOI{https://doi.org/10.46298/cm.11137}

\begin{document}

\section{Introduction}\label{sec:intro}

Most of the problems in the theory of homogeneous linear Diophantine approximation can be formulated within the following general setting. Given $n$ linearly independent linear forms $L_1,\ldots,L_n$ in $d$ real variables, $n\leq d$, one of the basic questions is how small the vector $(L_1(\vec u),\ldots,L_n(\vec u))$ can be as $\vec u$ ranges through nonzero integer points. There are several standard ways to measure the ``size'' of this vector. One way is to consider an arbitrary norm, say, the supremum norm. This is the case when dealing with the most classical problems of Diophantine approximation. Another way is to alter the supremum norm turning it into a~so called weighted norm, which leads to weighted Diophantine approximation. One can also measure a~vector by considering the product of the absolute values of its entries. This approach leads to the area of multiplicative Diophantine approximation, which is full of most intriguing problems. It suffices to mention that the famous Littlewood conjecture belongs to this area.

Whatever functional we choose to measure the ``size'' of a~vector, the notion of a~Diophantine exponent emerges naturally as the simplest quantitative characteristic responsible for the approximation properties of the $n$-tuple $(L_1,\ldots,L_n)$.

The cases $n<d$ and $n=d$ have to be distinguished. The former case, along with ordinary Diophantine exponents, provides multiplicative Diophantine exponents, whereas the latter one provides Diophantine exponents of lattices.

The main purpose of the current note is to study the possible values of uniform Diophantine exponents of lattices.

The rest of the paper is organized as follows. In Section~\ref{sec:lattice exponents} we give the definitions of regular and uniform Diophantine exponents of lattices, discuss what is known about their spectra, and formulate the main result of this paper. In Section~\ref{sec:proof} we formulate Davenport's lemma and apply it to prove our main theorem. Finally, in Section~\ref{sec:weak_uniform_exponents} we address the ``weak'' version of uniform lattice exponents and their analogues in the case $n<d$.

\section{Diophantine exponents of lattices}\label{sec:lattice exponents}

Given $\vec z=(z_1,\ldots,z_d)\in\R^d$, we denote
\[
  |\vec z|=\max_{1\leq i\leq d}|z_i|,
  \qquad
  \Pi(\vec z)=\prod_{
\begin{subarray}
{c}1\leq i\leq d
\end{subarray}
}|z_i|^{1/d}.
\]
For each $d$-tuple $\pmb\lambda=(\lambda_1,\ldots,\lambda_d)\in\R_+^d$ define the parallelepiped $\cP(\pmb\lambda)$ as
\begin{equation}\label{eq:prallelepipeds_lattice_exp}
  \cP(\pmb\lambda)=\Big\{\,\vec z=(z_1,\ldots,z_d)\in\R^d \ \Big|\ |z_i|\leq\lambda_i,\ i=1,\ldots,d \Big\}.
\end{equation}

Let $L_1,\ldots,L_d$ be linearly independent linear forms in $d$ real variables.
Consider the lattice
\[
  \La=\Big\{ (L_1(\vec u),\ldots,L_d(\vec u)) \,\Big|\ \vec u\in\Z^d \Big\}.
\]

\begin{definition}\label{def:regular_lattice_exponents}
  The supremum of real numbers $\gamma$ for which there exists $t$ however large and a~$d$-tuple $\pmb\lambda\in\R_+^d$ satisfying
  \[
    |\pmb\lambda|=t,
    \qquad
    \Pi(\pmb\lambda)=t^{-\gamma},
  \]
  such that $\cP(\pmb\lambda)$ contains a~nonzero point of $\La$, is called the \emph{(regular) Diophantine exponent} of $\La$ and is denoted by $\omega(\La)$.
\end{definition}

\begin{definition}\label{def:uniform_lattice_exponents}
  The supremum of real numbers $\gamma$ such that for every $t$ large enough and every $d$-tuple $\pmb\lambda\in\R_+^d$ satisfying
  \[
    |\pmb\lambda|=t,
    \qquad
    \Pi(\pmb\lambda)=t^{-\gamma},
  \]
  the parallelepiped $\cP(\pmb\lambda)$ contains a~nonzero point of $\La$, is called the \emph{uniform Diophantine exponent} of $\La$ and is denoted by $\hat\omega(\La)$.
\end{definition}

It is a~straightforward consequence of Minkowski's convex body theorem that
\[
  \omega(\La)\geq\hat\omega(\La)\geq0.
\]

It is natural to expect that $\omega(\La)$ can attain any non-negative value. For $d=2$ this statement is easily deduced from the theory of continued fractions. However, for $d\geq3$, it is still unproved. The best bound $\omega_d$ known up to now such that for every $\omega\geqslant\omega_d$ there is a~lattice $\La$ such that $\omega(\La)=\omega$ is equal to $3-d(d-1)^{-2}$, see~\cite{german_lattice_exponents_spectrum}.

As for the possible values of the uniform exponent $\hat\omega(\La)$, nothing has been known so far except for the two obviously attainable values -- zero and infinity. Indeed, $\hat\omega(\La)$ is obviously equal to $\infty$ whenever $\La$ is a~sublattice of $\Z^d$. As for the zero value, it is attained, for instance, on a~lattice
\begin{equation*}
  \La=
  \begin{pmatrix}
    \sigma_1(\theta_1) & \sigma_1(\theta_2) & \cdots & \sigma_1(\theta_d) \\
    \sigma_2(\theta_1) & \sigma_2(\theta_2) & \cdots & \sigma_2(\theta_d) \\
    \vdots & \vdots & \ddots & \vdots \\
    \sigma_d(\theta_1) & \sigma_d(\theta_2) & \cdots & \sigma_d(\theta_d)
  \end{pmatrix}
\Z^d,
\end{equation*}
where $\theta_1,\ldots,\theta_d$ is a~basis of a~totally real algebraic extension of $\Q$ of degree $d$, and $\sigma_1,\ldots,\sigma_d$ are the embeddings of this extension into $\R$. Such lattices are called algebraic. More details on algebraic lattices can be found in~\cite{borevich_shafarevich}. Due to Dirichlet's unit theorem (see also~\cite{borevich_shafarevich}), the functional $\Pi(\cdot)$ is bounded away from zero at nonzero points of an algebraic lattice, hence both the regular exponent of such a~lattice, and the uniform one have to be equal to zero.

The following statement is the main result of the paper.

\begin{theorem}\label{t:the_spectrum_of_the_uniform_lattice_exponent}
  Let $\La$ be a~full-rank lattice in $\R^d$. If $\La$ is similar to a~sublattice of $\Z^d$ modulo the action of the group of non-degenerate diagonal operators, then $\hat\omega(\La)=\infty$. Otherwise, $\hat\omega(\La)=0$.
\end{theorem}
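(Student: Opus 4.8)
The plan. The first assertion is the easy one. If $D\La\subseteq\Z^d$ for a non-degenerate diagonal operator $D$, then, $\La$ having full rank, $D\La$ has finite index in $\Z^d$, so $\La\supseteq ND^{-1}\Z^d$ for some $N\in\Z_{>0}$; in particular $\La$ contains the nonzero vector $ND^{-1}\vec e_i$ on the $i$-th coordinate axis for each $i$. Hence, given any $\pmb\lambda$ with $|\pmb\lambda|=t$ and choosing $i$ with $\lambda_i=t$, the point $ND^{-1}\vec e_i$ lies in $\cP(\pmb\lambda)$ as soon as $t$ is large enough; this is independent of $\gamma$, so $\hat\omega(\La)=\infty$. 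The converse hinges on the observation that $D\La\subseteq\Z^d$ for some non-degenerate diagonal $D$ \emph{if and only if} $\La$ contains a nonzero point on every coordinate axis: the ``only if'' part was just shown, and for the ``if'' part, if $\vec v_i=v_i\vec e_i\in\La\setminus\{0\}$ for all $i$, then $\La$ contains the full-rank diagonal lattice $D_0\Z^d$ with $D_0=\operatorname{diag}(v_1,\dots,v_d)$, so $\Z^d\subseteq D_0^{-1}\La$ with some finite index $m$, whence $(mD_0^{-1})\La=m(D_0^{-1}\La)\subseteq\Z^d$. Therefore, if $\La$ is \emph{not} of the form in the theorem, some coordinate axis --- say the $d$-th --- contains no nonzero point of $\La$, and the task is to prove $\hat\omega(\La)=0$.

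Second, I would reduce this to a cleaner statement. The set of $\gamma$ for which ``for every large $t$ and every $\pmb\lambda$ with $|\pmb\lambda|=t$, $\Pi(\pmb\lambda)=t^{-\gamma}$ the box $\cP(\pmb\lambda)$ meets $\La\setminus\{0\}$'' is an initial segment of $\R$: given such a box with $\Pi(\pmb\lambda)=t^{-\gamma'}$ and $\gamma>\gamma'$, shrinking a non-maximal side down to $\Pi=t^{-\gamma}$ produces a smaller box of the required shape, so if the smaller one meets $\La\setminus\{0\}$ so does the original. Consequently $\hat\omega(\La)=0$ will follow once we produce a single constant $c>0$ and a sequence $t_k\to\infty$ together with \emph{lattice-free} parallelepipeds $\cP(\pmb\lambda^{(k)})$ with $|\pmb\lambda^{(k)}|=t_k$ and $\prod_i\lambda_i^{(k)}\ge c$: thinning a non-maximal side of $\cP(\pmb\lambda^{(k)})$ then gives, for any prescribed $\gamma>0$ and all large $k$, a lattice-free box with $|\pmb\lambda|=t_k$ and $\Pi(\pmb\lambda)=t_k^{-\gamma}$, which rules out every positive value for $\hat\omega(\La)$.

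The main step --- and, as the organisation of Section~\ref{sec:proof} suggests, the one using Davenport's lemma --- is the construction of these boxes, which I would take to be long in the $d$-th direction and thin in the others: $\lambda_d^{(k)}=t_k$ and $\prod_{i<d}\lambda_i^{(k)}\asymp 1/t_k$. Because the $d$-th axis carries no lattice point, the projection $p\colon\vec z\mapsto(z_1,\dots,z_{d-1})$ is injective on $\La$, so such a box is lattice-free exactly when the $(d-1)$-dimensional box $\prod_{i<d}[-\lambda_i^{(k)},\lambda_i^{(k)}]$ avoids $p(\vec z)$ for every nonzero $\vec z\in\La$ with $|z_d|\le t_k$. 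The difficulty is that the relevant configurations in $\R^{d-1}$ --- notably the rank-$\le(d{-}2)$ lattice $\La\cap\{z_d=0\}$, and more generally the lattice points in the slab $|z_d|\le t_k$ --- need not be aligned with the coordinate axes, while our boxes must be. Davenport's lemma, which controls how the lattice points contained in a box move when the box is reshaped at essentially fixed volume, is precisely what lets one start from a small lattice-free cube (available below the packing radius of $\La$) and stretch it along $\vec e_d$ while compressing the remaining sides, keeping the volume bounded below and tracking the finitely many lattice points that threaten to enter; the empty-axis hypothesis guarantees that no lattice point obstructs the stretching outright, and choosing the scales $t_k$ appropriately --- the one-dimensional prototype of the whole statement being the classical fact that the uniform exponent of a single irrational equals $1$, witnessed at $t$ just below a denominator of a continued-fraction convergent --- lets us dodge those that do enter.

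I expect this last step to be the only real obstacle: extracting from the qualitative ``empty axis'' condition a quantitative family of lattice-free boxes of bounded volume and unbounded diameter. A natural way to organise it is by induction on $d$ --- the case $d=1$ being vacuous, since every full-rank lattice in $\R$ is of the diagonal-integral type --- where Davenport's lemma is used in the inductive step to replace the ambient box by one adapted to $\La\cap\{z_d=0\}$, reducing matters to the $(d-1)$-dimensional statement for that sublattice together with a one-dimensional inhomogeneous-approximation input coming from the homomorphism $p(\La)\to\R$, $p(\vec z)\mapsto z_d$. Everything preceding this step --- the $\infty$ case, the axis characterisation, and the reduction --- is routine.
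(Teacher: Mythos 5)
Your treatment of the easy direction (the $\infty$ case), your characterization of the diagonal-integral lattices as exactly those with a nonzero lattice point on every coordinate axis, and your reduction of $\hat\omega(\La)=0$ to the existence of lattice-free parallelepipeds $\cP(\pmb\lambda^{(k)})$ with $|\pmb\lambda^{(k)}|=t_k\to\infty$ and $\prod_i\lambda^{(k)}_i$ bounded away from zero are all correct, and they match the paper, which performs exactly this reduction by passing to Theorem~\ref{t:triviality_of_the_uniform_lattice_exponent}. The problem is that the construction of those parallelepipeds is the entire content of the theorem, and your proposal leaves it as a plan rather than a proof: ``start from a small lattice-free cube, stretch it along the empty axis while compressing the other sides, track the finitely many lattice points that threaten to enter, and dodge them by choosing the scales $t_k$.'' No argument is given that the volume can be kept bounded below while doing this, and that is precisely the non-trivial point: as the box is elongated, lattice points do enter, and naively compressing the transverse sides to expel them drives the volume to $0$. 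The alternative you float --- induction on $d$ via $\La\cap\{z_d=0\}$ plus a one-dimensional inhomogeneous input --- is not developed into an argument, and it is not clear the inductive hypothesis even applies to that sublattice.

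For comparison, the paper's mechanism, which your sketch does not reproduce, is this. Let $\ell$ be the empty axis and let $\cL$ be the \emph{minimal} subspace spanned by vectors of $\La$ containing $\ell$, $p=\dim\cL$. Minimality forces, for every $\e>0$, the thin cylinder $\cC_\e$ around $\ell$ to contain $p$ linearly independent points of $\La\cap\cL$ (after projecting $\La\cap\cL$ to a coordinate subspace $\cL'$ when $p<d$). One then takes a box $\cP(\pmb\lambda)$ of fixed volume, shaped like the cylinder and so elongated that $\mu_p\cP(\pmb\lambda)\subseteq\cC_\e$, where $\mu_k$ denote the successive minima; Davenport's lemma yields a lattice-free reshaped box with sides $c\,\mu_{k_i}\lambda_i$, hence contained in $\mu_p\cP(\pmb\lambda)\subseteq\cC_\e$, and Minkowski's second theorem guarantees its volume $\asymp\prod_k\mu_k$ is bounded below by a constant independent of $\e$. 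So the key idea is not to ``reshape while tracking points'' but to apply Davenport's lemma to a box all of whose successive minima are already realized inside the cylinder. Moreover, when $p<d$ one must additionally thicken the resulting $p$-dimensional box in the directions transverse to $\cL$ by half the distance from $\cL$ to the nearest point of $\La\setminus\cL$; your outline does not isolate this case at all. Until the elongation step is carried out in some such form, the proof is incomplete.
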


A lattice $\La$ is the image of a~sublattice of $\Z^d$ under the action of a~non-degenerate diagonal operator if and only if every coordinate axis contains nonzero points of $\La$. Thus, Theorem~\ref{t:the_spectrum_of_the_uniform_lattice_exponent} follows immediately from Theorem~\ref{t:triviality_of_the_uniform_lattice_exponent}.

\begin{theorem}\label{t:triviality_of_the_uniform_lattice_exponent}
  Let $\La$ be a~full-rank lattice in $\R^d$. Let $\ell$ be the first coordinate axis in $\R^d$, i.e.
  \[
    \ell=\Big\{ \vec z=(z_1,\ldots,z_d)\in\R^d \ \Big|\ z_i=0,\ i=2,\ldots,d \Big\}.
  \]
  Suppose $\ell$ contains no points of $\La$ other than the origin. Then $\hat\omega(\La)=0$.
\end{theorem}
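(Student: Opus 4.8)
Throughout put $p\colon\R^d\to\R^{d-1}$, $p(z_1,\dots,z_d)=(z_2,\dots,z_d)$, so that $\ell=\ker p$. Unwinding Definition~\ref{def:uniform_lattice_exponents}, the assertion $\hat\omega(\La)=0$ means: for every $\gamma>0$ and every $T$ there are $t>T$ and $\pmb\lambda\in\R_+^d$ with $|\pmb\lambda|=t$, $\Pi(\pmb\lambda)=t^{-\gamma}$ and $\cP(\pmb\lambda)\cap\La=\{0\}$. One may normalise $\det\La=1$ (rescaling $\La$ only shifts $t$ by a bounded factor, irrelevant as $t\to\infty$), and it is enough to look for such a $\pmb\lambda$ with $\lambda_1=t=|\pmb\lambda|$, i.e.\ with longest edge along the coordinate axis about which we have information. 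Since $\La\cap\ell=\{0\}$, the map $p$ is injective on $\La$ and, for each $t$, the set $P_t:=p\bigl(\{\,w\in\La\setminus\{0\}:|w_1|\le t\,\}\bigr)$ is a discrete subset of $\R^{d-1}$ missing the origin — a sequence in $P_t$ with a finite limit would lift to a bounded, hence finite, subset of $\La$. Because $\cP(t,\lambda_2,\dots,\lambda_d)\cap\La=\{0\}$ is equivalent to $\bigl([-\lambda_2,\lambda_2]\times\cdots\times[-\lambda_d,\lambda_d]\bigr)\cap P_t=\emptyset$, the theorem reduces to: for every $\gamma>0$ there are arbitrarily large $t$ admitting a centred box $Q\subset\R^{d-1}$ with $\mathrm{vol}(Q)=2^{d-1}t^{-d\gamma-1}$, all edges at most $2t$, and $Q\cap P_t=\emptyset$.

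For the simplest realisation, set $\psi(t)=\min\{\,|v|_\infty:v\in P_t\,\}>0$ (well defined, non-increasing) and note Minkowski gives $\psi(t)\le t^{-1/(d-1)}$. If $\psi(t)\ge c\,t^{-1/(d-1)}$ for some fixed $c>0$ along a sequence $t\to\infty$ — which always happens when $d=2$, since a best approximation of the slope of $\La$ is Dirichlet-optimal at scales just below its denominator — then along that sequence the box $\cP\bigl(t,\,t^{-(d\gamma+1)/(d-1)},\dots,t^{-(d\gamma+1)/(d-1)}\bigr)$ has edges at most $2t$, volume $2^dt^{-d\gamma}$, and, as $(d\gamma+1)/(d-1)>1/(d-1)$, is empty — and we are done. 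The genuine difficulty is that for $d\ge3$ the line $\ell$ may be so well approximated by $\La$ that $\psi$ decays strictly faster, so that no such ``straight'' box is ever empty. One then has to use that, although points of $\La$ approach $\ell$, they cannot approach it along all $d-1$ transverse directions at once — that would force a point on $\ell$ itself, against the hypothesis. Hence at the relevant scales the near-$\ell$ points of $\La$ cluster near a \emph{proper} subspace of $\R^{d-1}$, and a box that is thin only transverse to that subspace and merely of bounded width along it still avoids $P_t$.

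Turning this heuristic into a proof — choosing the right box shape and, above all, the right sequence of scales $t$ at which it is empty — is exactly what Davenport's lemma is for: it controls when an empty parallelepiped may be deformed into a differently shaped one of the same (or smaller) volume that is still empty, trading length along some coordinate directions for thinness along others. One thus starts from a soft empty box (e.g.\ the cube $\cP(t^{-\gamma},\dots,t^{-\gamma})$, empty as soon as $t^{-\gamma}$ drops below the length of the shortest vector of $\La$) and stretches it into an admissible shape with $\lambda_1=t$ along a suitable sequence of $t$'s; the hypothesis $\La\cap\ell=\{0\}$ is precisely what guarantees the stretching along $\ell$ is never blocked for a trivial reason — with a point on $\ell$ it would stop at a bounded length, and indeed then $\hat\omega(\La)=\infty$. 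I expect the technical heart of the argument to be exactly this last point: controlling the clustering of the near-$\ell$ points of $\La$ and identifying the admissible scales $t$, the higher-dimensional substitute for the continued-fraction mechanism that makes the case $d=2$ transparent.
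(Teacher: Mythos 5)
Your setup and your reduction are sound, and you correctly identify Davenport's lemma as the engine, but what you have written is a heuristic with two genuine gaps at exactly the places you yourself flag as ``the technical heart''. First, you never actually invoke Davenport's lemma in the form in which it is true. The lemma does not say that an empty box can be ``stretched'' into an admissible shape while staying empty; it says that for \emph{any} $\pmb\lambda$ of unit coproduct there is a permutation $k_1,\dots,k_d$ such that $\cP(\pmb\lambda')$ with $\lambda'_i=c\mu_{k_i}\lambda_i$ is empty. The paper's argument runs in the opposite direction from your ``soft empty cube'': one starts from the \emph{elongated} box $\cP(\pmb\lambda)$ with $\lambda_1=\lambda$, $\lambda_i=\lambda^{-1/(d-1)}$, chooses $\lambda$ so large that $\mu_d\big(\cP(\pmb\lambda),\La\big)\lambda^{-1/(d-1)}<\e$ (possible precisely because the cylinder $\cC_\e$ contains $d$ linearly independent lattice points), and then Davenport's lemma produces an empty $\cP(\pmb\lambda')\subset\mu_d\cP(\pmb\lambda)\subset\cC_\e$ whose volume is $\asymp\prod_k\mu_k\asymp\det\La$ by Minkowski's second theorem. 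The lower bound on the volume of the \emph{empty} box, which is what forces $\lambda'_1$ to be huge once the cross-section is $<\e$, comes from this pairing of Davenport with Minkowski's second theorem, and none of it is present in your text.

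Second, the dichotomy you propose for the hard case is not the right one, and the claim supporting it is false: points of $\La$ \emph{can} approach $\ell$ with all $d-1$ transverse coordinates tending to $0$ simultaneously without $\ell$ containing a lattice point (already for $\La=\{(q,\,q\alpha_2-p_2,\,q\alpha_3-p_3)\}$ with generic $\alpha_2,\alpha_3$). The correct case distinction is whether the lattice vectors lying in arbitrarily thin cylinders $\cC_\e$ span all of $\R^d$ (Case 1 above, which covers your ``approach from all directions'' situation) or only a proper subspace $\cL\supset\ell$ of some dimension $p<d$. In the latter case your prescription for the box is also inverted: since for small $\e$ all lattice points of $\cC_\e$ lie in $\cL$, and $\La\cap\cL$ still has points arbitrarily close to $\ell$ \emph{inside} $\cL$, the box must be made thin \emph{along} the $(p-1)$-dimensional trace of $\cL$ in the transverse coordinates (by running the whole Case-1 argument for the projected lattice $\La'$ inside the $p$-dimensional coordinate subspace), and merely of bounded width $\delta/(2\sqrt{d-p})$ \emph{transverse} to $\cL$, where $\delta>0$ is the distance from $\cL$ to the nearest point of $\La\setminus\cL$; a box that is thin only transverse to $\cL$ would still swallow the nonzero points of $\La\cap\cL$ near $\ell$. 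So the two steps you defer are not routine bookkeeping but the actual content of the proof, and the geometric picture you offer for the second one would not work as stated.
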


\section{Davenport's lemma and the proof of Theorem~\ref{t:triviality_of_the_uniform_lattice_exponent}}\label{sec:proof}

The following statement was essentially proved by Davenport in~\cite{davenport_AA_1936}. A detailed proof can be found in Schmidt's book~\cite{schmidt_DA}.

\begin{theorem}[Davenport's lemma]\label{t:davenport_1937}
  Let $\La$ be a~full-rank 
  lattice in $\R^d$. Consider an arbitrary $d$-tuple $\pmb\lambda=(\lambda_1,\ldots,\lambda_d)\in\R_+^d$ such that
  \[
    \prod_{i=1}^{d}\lambda_i=1.
  \]
  For each $k=1,\ldots,d$, let $\mu_k=\mu_k\big(\cP(\pmb\lambda),\La\big)$ denote the $k$-th successive minimum of $\cP(\pmb\lambda)$ w.r.t. $\La$. Then there is a~permutation $k_1,\ldots,k_d$ of the indices $1,\ldots,d$ and a~positive constant $c$ depending only on $d$ such that there are no nonzero points of $\La$ in $\cP(\pmb\lambda')$, where $\pmb\lambda'=(\lambda'_1,\ldots,\lambda'_d)$,
  \begin{equation}\label{eq:davenport_1937_lambda_prime}
    \lambda'_i=c\mu_{k_i}\lambda_i,
    \qquad
    i=1,\ldots,d.
  \end{equation}
\end{theorem}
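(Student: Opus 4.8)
The plan is to normalize to the unit cube and then read the desired empty box off the geometry of the successive minima. Put $D=\mathrm{diag}(\lambda_1,\dots,\lambda_d)$ and replace $\La$ by $\La'=D^{-1}\La$. Since $\prod_i\lambda_i=1$, the covolume is preserved, $\det\La'=\det\La$, the body $\cP(\pmb\lambda)$ becomes the cube $Q=[-1,1]^d$, and the successive minima $\mu_k$ are unchanged. The assertion then reduces to producing a permutation $k_1,\dots,k_d$ and a constant $c=c(d)>0$ for which the box $B=\{\vec z\in\R^d : |z_i|\leq c\,\mu_{k_i},\ i=1,\dots,d\}$ contains no nonzero point of $\La'$; transporting back by $D$ recovers $\cP(\pmb\lambda')$ with $\lambda_i'=c\,\mu_{k_i}\lambda_i$.

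Next I would fix linearly independent $\vec g_1,\dots,\vec g_d\in\La'$ realizing the minima, so that, writing $g_{i,k}$ for the $i$-th coordinate of $\vec g_k$, one has $|g_{i,k}|\leq\mu_k$; let $G$ be the matrix with these columns. By Minkowski's second theorem applied to $Q$ we have $\prod_k\mu_k\leq\det\La'\leq d!\prod_k\mu_k$, while $|\det G|\geq\det\La'$ because the $\vec g_k$ are independent lattice vectors. Expanding $\det G$ by the Leibniz formula over its $d!$ summands and applying the pigeonhole principle yields a permutation $\tau$ with $\prod_i|g_{i,\tau(i)}|\geq|\det G|/d!\geq\det\La'/d!$. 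Combining this with the bounds $|g_{i,\tau(i)}|\leq\mu_{\tau(i)}$ forces, for every $i$,
\[
  \frac{\mu_{\tau(i)}}{d!}\leq|g_{i,\tau(i)}|\leq\mu_{\tau(i)},
\]
so that the generalized diagonal selected by $\tau$ has each entry comparable to the matching minimum. I would then declare $k_i=\tau(i)$; this is the permutation in the statement.

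It remains to show $B$ is empty for small $c$. Suppose $\vec p\in\La'$, $\vec p\neq\vec 0$, lies in $B$, and write $\vec p=\sum_k a_k\vec g_k$ with $k_0$ the largest index carrying a nonzero coefficient. Replacing the $k_0$-th column of $G$ by $\vec p$ produces a matrix $M$ whose columns are still independent lattice vectors, so $|\det M|\geq\det\La'\geq\prod_k\mu_k$. Expanding $\det M$ along the column $\vec p$, whose cofactors are $(d-1)\times(d-1)$ minors of $G$ bounded by $(d-1)!\,\mu_k$-products, and using $|p_i|\leq c\,\mu_{\tau(i)}$ together with $\sum_i\mu_{\tau(i)}=\sum_k\mu_k\leq d\,\mu_d$, one gets $\det\La'\leq c\,(d-1)!\,\bigl(\prod_{k\neq k_0}\mu_k\bigr)\sum_k\mu_k$, hence $\mu_{k_0}\leq c\,d!\,\mu_d$. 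When $k_0=d$ this reads $1\leq c\,d!$, a contradiction as soon as $c<1/d!$; the transparent model is $d=2$, $k_0=2$, where $\det\La'\leq|g_{1,1}p_2-g_{2,1}p_1|\leq\mu_1(|p_1|+|p_2|)\leq 2c\,\mu_1\mu_2$ contradicts $\det\La'\geq\mu_1\mu_2$.

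The main obstacle is exactly the case $k_0<d$, in which the appearance of $\mu_d$ above is wasteful: the box permits $\vec p$ to be as large as $c\,\mu_d$ in the high-minima coordinates, so the crude bound on $\sum_i|p_i|$ destroys the argument. The point to exploit is that $\vec p$ lies in $V=\mathrm{span}(\vec g_1,\dots,\vec g_{k_0})$, all of whose generators have entries at most $\mu_{k_0}$, so the coordinates of $\vec p$ in the high-minima directions are genuinely constrained. Concretely, I would restrict attention to the pivot coordinate set $S_0=\tau^{-1}(\{1,\dots,k_0\})$, matched by $\tau$ to the small minima $\mu_1,\dots,\mu_{k_0}$; there $\sum_{i\in S_0}|p_i|\leq c\sum_{k\leq k_0}\mu_k\leq c\,k_0\,\mu_{k_0}$, and the generalized-diagonal bound makes the $S_0\times\{1,\dots,k_0\}$ minor of $G$ nondegenerate, so that the $k_0$-dimensional version of the determinant computation above (equivalently, Minkowski's second theorem inside $V$, whose minima with respect to $Q\cap V$ coincide with $\mu_1,\dots,\mu_{k_0}$) again produces a contradiction for $c$ below a threshold depending only on $d$. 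Establishing that this minor is nondegenerate of size $\asymp\prod_{k\leq k_0}\mu_k$ — or, alternatively, setting up an induction on $d$ that peels off a primitive shortest vector — is where the real work lies; once the matching is made precise, choosing $c=c(d)$ small enough expels every nonzero lattice point from $B$, which is the assertion.
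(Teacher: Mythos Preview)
The paper does not supply its own proof of this theorem; it quotes the statement and refers the reader to Davenport's 1937 note and to Schmidt's book. There is therefore no in-paper argument to compare your attempt against.

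On the attempt itself: the reduction to the unit cube and the selection of $\tau$ via the Leibniz--pigeonhole trick are correct and standard, and the case $k_0=d$ is handled cleanly. The case $k_0<d$, which you rightly flag as the crux, is genuinely left open by what you have written, and neither of your two suggested routes closes it as stated. First, the generalized-diagonal bound $|g_{i,\tau(i)}|\geq\mu_{\tau(i)}/d!$ does not by itself force the $S_0\times\{1,\dots,k_0\}$ minor of $G$ to have size $\asymp\prod_{k\leq k_0}\mu_k$: large diagonal entries in a block do not preclude near-singularity of that block, and a Laplace expansion of $\det G$ along the first $k_0$ columns only shows that \emph{some} $k_0$-subset of rows yields a large minor, not necessarily your $S_0$. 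Second, while your parenthetical that the successive minima of $\La'\cap V$ with respect to $Q\cap V$ equal $\mu_1,\dots,\mu_{k_0}$ is correct, $Q\cap V$ is not a coordinate parallelepiped in any natural coordinates on $V$, so the statement of the lemma does not literally recurse there; you would need an intrinsic version of the argument inside $V$, which is essentially the whole problem over again. For a complete proof you should consult the references the paper cites; the point is precisely to choose the permutation in a way that is compatible with the flag $\mathrm{span}(\vec g_1)\subset\mathrm{span}(\vec g_1,\vec g_2)\subset\cdots$, after which the obstruction you identified disappears.
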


\begin{proof}[Proof of Theorem~\ref{t:triviality_of_the_uniform_lattice_exponent}]
  Let $p$ be the smallest integer such that there is a~$p$-dimensional subspace $\cL$ of $\R^d$ with the following two properties:

  1) $\cL$ is spanned by some vectors of $\La$;

  2) $\ell\subset\cL$.
  \\
  Note that these properties determine $\cL$ uniquely.

  \textbf{Case 1.}
  Suppose $p=d$, i.e. there is no proper subspace of $\R^d$ spanned by vectors of $\La$ which contains $\ell$. Fix an arbitrarily small $\e>0$. Then the cylinder
  \begin{equation}\label{eq:cylinder}
    \cC_\e=\Big\{ \vec z=(z_1,\ldots,z_d)\in\R^d \ \Big|\ |z_i|<\e,\ i=2,\ldots,d \Big\}
  \end{equation}
  contains $d$ linearly independent points of $\La$. Hence, for every such an $\e$, there is a~positive $\lambda$ and a~$d$-tuple $\pmb\lambda=(\lambda_1,\ldots,\lambda_d)\in\R_+^d$ such that
  \[
    \lambda_1=\lambda,
    \qquad
    \lambda_i=\lambda^{-1/(d-1)},
    \qquad
    i=2,\ldots,d,
  \]
  and
  \[
    \mu_d\big(\cP(\pmb\lambda),\La\big)\lambda^{-1/(d-1)}<\e.
  \]
  Consider the parallelepiped $\cP(\pmb\lambda')$ with $\pmb\lambda'$ defined by~\eqref{eq:davenport_1937_lambda_prime}. By Theorem~\ref{t:davenport_1937} $\cP(\pmb\lambda')$ contains no nonzero points of $\La$. On the other hand, $\cP(\pmb\lambda')$ is contained in $\mu_d\big(\cP(\pmb\lambda),\La\big)\cP(\pmb\lambda)$, i.e. it is a~subset of $\cC_\e$. Taking into account that the volume of $\cP(\pmb\lambda')$ is bounded away from zero and that $\e$ is arbitrarily small, we get $\hat\omega(\La)=0$.

  \textbf{Case 2.}
  Suppose $p<d$, i.e. $\cL$ is a~proper subspace of $\R^d$. Without loss of generality, we may assume that among the Pl\"ucker coordinates of $\cL$ the greatest one refers to the first $p$ coordinates in $\R^d$. Denote by $\cL'$ the subspace of the first $p$ coordinates in $\R^d$, i.e.
  \[
    \cL'=\Big\{ \vec z=(z_1,\ldots,z_d)\in\R^d \ \Big|\ z_i=0,\ i=p+1,\ldots,d \Big\}.
  \]
  Denote also by $\La'$ the orthogonal projection of $\La\cap\cL$ onto $\cL'$. Then $\La'$ is a~lattice of rank $p$ with determinant depending only on $\La$ and $\cL$. Take an arbitrarily small $\e>0$. The cylinder $\cC_\e$ defined by~\eqref{eq:cylinder} contains $p$ linearly independent points of $\La'$. Hence, same as in Case 1, for every such an $\e$, there is a~positive $\lambda$ and a~$d$-tuple $\pmb\lambda=(\lambda_1,\ldots,\lambda_d)\in\R_+^d$ such that
  \[
    \lambda_1=\lambda,
    \qquad
    \lambda_i=\lambda^{-1/(p-1)},
    \qquad
    i=2,\ldots,p,
    \qquad
    \lambda_j=0,
    \qquad
    j=p+1,\ldots,d.
  \]
  and
  \[
    \mu_p\big(\cP(\pmb\lambda),\La'\big)\lambda^{-1/(p-1)}<\e.
  \]
  Notice that $\cP(\pmb\lambda)$ is a~$p$-dimensional parallelepiped contained in $\cL'$. Therefore, if we set $\mu_k=\mu_k\big(\cP(\pmb\lambda),\La'\big)$, $k=1,\ldots,p$, and apply Minkowski's second theorem, we get
  \begin{equation}\label{eq:prod_of_the_mus}
    \prod_{k=1}^{p}\mu_k
    \asymp
    \det\La'
  \end{equation}
  with the implied constants depending only on $p$.

  By Theorem~\ref{t:davenport_1937} there is a~permutation $k_1,\ldots,k_p$ of the indices $1,\ldots,p$ and a~positive constant $c'$ depending only on $p$ such that there are no nonzero points of $\La'$ in $\cP(\pmb\lambda')$, where $\pmb\lambda'=(\lambda'_1,\ldots,\lambda'_d)$,
  \begin{equation*}
    \lambda'_i=c'\mu_{k_i}\lambda_i,
    \qquad
    i=1,\ldots,p,
    \qquad
    \lambda'_j=0,
    \qquad
    j=p+1,\ldots,d.
  \end{equation*}
  Since $\cP(\pmb\lambda')$ is evidently a~subset of $\mu_p\big(\cP(\pmb\lambda),\La'\big)\cP(\pmb\lambda)$, it is also a~subset of $\cC_\e$.

  Set $\delta$ to be equal to the distance from $\cL$ to the nearest point of $\La\backslash\cL$. Consider the $(d-p)$-dimensional parallelepiped $\cP(\pmb\lambda'')$, where $\pmb\lambda''=(\lambda''_1,\ldots,\lambda''_d)$,
  \[
    \lambda''_i=0,
    \quad
    i=1,\ldots,p,
    \quad
    \lambda''_j=\delta\big/\big(2\sqrt{d-p}\big),
    \qquad
    j=p+1,\ldots,d.
  \]
  Then $\cP(\pmb\lambda'')$ is contained in the $\delta/2$-neighborhood of the origin $\vec 0$. If $\e<\delta\big/\big(4\sqrt{d-p}\big)$, then the Minkowski's sum $\cP(\pmb\lambda')+\cP(\pmb\lambda'')=\cP(\pmb\lambda'+\pmb\lambda'')$ is contained in the open $\delta$-neighborhood of $\ell$, and thus, does not contain any points of $\La\backslash\cL$. On the other hand, $\cP(\pmb\lambda')$ is the orthogonal projection of $\cP(\pmb\lambda'+\pmb\lambda'')$ onto $\cL'$, hence the fact that there are no nonzero points of $\La'$ in $\cP(\pmb\lambda')$ implies that there are no nonzero points of $\La\cap\cL$ in $\cP(\pmb\lambda'+\pmb\lambda'')$.

  Thus, the parallelepiped $\cP(\pmb\lambda'+\pmb\lambda'')$ contains no nonzero points of $\La$, whereas its volume
  \[
    \frac{2^d (c')^p \delta^{d-p}}{\big(2\sqrt{d-p}\big)^{d-p}}
    \cdot
    \prod_{k=1}^{p}\mu_k,
  \]
  due to~\eqref{eq:prod_of_the_mus}, is bounded away from zero by a~constant that depends only on $\La$ and $\cL$. Taking into account that $\e$ is arbitrarily small, we get $\hat\omega(\La)=0$.
\end{proof}

\section{Concerning the ``weak'' version of uniform lattice exponents and multiplicative Diophantine exponents}\label{sec:weak_uniform_exponents}

\subsection{Weak uniform Diophantine exponents}

Changing one of the quantifiers in Definition~\ref{def:uniform_lattice_exponents} leads to the following ``weak'' version of a~uniform Diophantine exponent of a~lattice.

\begin{definition}\label{def:weak_uniform_lattice_exponents}
  The supremum of real numbers $\gamma$ such that for every $t$ large enough there exists a~$d$-tuple $\pmb\lambda\in\R_+^d$ satisfying
  \[
    |\pmb\lambda|=t,
    \qquad
    \Pi(\pmb\lambda)=t^{-\gamma},
  \]
  such that
  the parallelepiped $\cP(\pmb\lambda)$ contains a~nonzero point of $\La$, is called the \emph{weak uniform Diophantine exponent} of $\La$ and is denoted by $\breve\omega(\La)$.
\end{definition}

It is clear that the argument provided in Section~\ref{sec:proof} proves nothing in the case of weak uniform lattice exponents. Thus, the following problem is still unsolved.

\begin{problem}
  Describe the spectrum of the exponent $\breve\omega(\La)$.
\end{problem}

It is also easy to see that $\breve\omega(\La)$ is equal to the supremum of real numbers $\gamma$ such that for every $t$ large enough the system of inequalities
\begin{equation}\label{eq:weak_uniform_lattice_exponents}
  |\vec z|\leq t,
  \qquad
  \Pi(\vec z)\leq t^{-\gamma}
\end{equation}
admits a~nonzero solution $\vec z\in\La$. This resembles a~lot how multiplicative Diophantine exponents are defined.

\subsection{Multiplicative Diophantine exponents}

As we mentioned in the Introduction, the case $n<d$ leads to multiplicative Diophantine exponents. In order to give the respective definitions, along with the functional $\Pi(\cdot)$, we will need the functional $\Pi'(\cdot)$. Given a~positive integer $k$ and a~$k$-tuple $\vec z=(z_1,\ldots,z_k)\in\R^k$, we denote
\[
  \Pi(\vec z)=\prod_{
\begin{subarray}
{c}1\leq i\leq k
\end{subarray}
}|z_i|^{1/k},
  \qquad
  \Pi'(\vec z)=\prod_{1\leq i\leq k}\max\big(1,|z_i|\big)^{1/k}.
\]

Suppose $n<d$. Then, without loss of generality, we may assume that the coefficients of the forms $L_1,\ldots,L_n$ are written in the rows of the matrix
\[
  \begin{pmatrix}
    \Theta & -\vec I_n
  \end{pmatrix}
=
  \begin{pmatrix}
    \theta_{11} & \cdots & \theta_{1m} & -1 & \cdots & 0 \\
    \vdots & \ddots & \vdots & \vdots & \ddots & \vdots \\
    \theta_{n1} & \cdots & \theta_{nm} & 0 & \cdots & -1
  \end{pmatrix}
.
\]

\begin{definition}\label{def:mbeta}
  Supremum of real numbers $\gamma$ for which there exists $t$ however large such that the system of inequalities
  \begin{equation}\label{eq:mbeta}
    \Pi'(\vec x)\leq t,
    \qquad
    \Pi(\Theta\vec x-\vec y)\leq t^{-\gamma}
  \end{equation}
  admits a~solution $(\vec x,\vec y)\in\Z^m \oplus\Z^n$ with nonzero $\vec x$ is called the \emph{multiplicative Diophantine exponent} of $\Theta$ and is denoted by $\omega_\times(\Theta)$.

  The respective \emph{uniform multiplicative Diophantine exponent} $\hat\omega_\times(\Theta)$ is defined as the supremum of real numbers $\gamma$ such that~\eqref{eq:mbeta} admits solutions $(\vec x,\vec y)\in\Z^m \oplus\Z^n$ with nonzero $\vec x$ for every $t$ large enough.
\end{definition}

As in the case of lattice exponents, it is a~straightforward consequence of Minkowski's convex body theorem that
\[
  \omega_\times(\Omega)\geq\hat\omega_\times(\Omega)\geq m/n.
\]
Wang and Yu proved in~\cite{wang_yu} that $\omega_\times(\Omega)=m/n$ holds for almost all $\Theta$ with respect to the Lebesgue measure. Clearly, in this case $\hat\omega_\times(\Omega)$ also equals $m/n$. Apart from this result nothing is known so far about the set of admissible values of $\omega_\times(\Omega)$ or $\hat\omega_\times(\Omega)$. Thus, the following problem is also currently far from its solution.

\begin{problem}
  Describe the spectra of the exponents $\omega_\times(\Omega)$ and $\hat\omega_\times(\Omega)$.
\end{problem}

\subsection*{Acknowledgments.}
The author is a~winner of the ``Junior Leader'' contest conducted by Theoretical Physics and Mathematics Advancement Foundation ``BASIS'' and would like to thank its sponsors and jury.

This research was supported by the Russian Science Foundation (grant 22-41-05001), https://rscf.ru/en/project/22-41-05001/.

{\small

}

\EditInfo{March 30, 2023}{April 28, 2023}{Camilla Hollanti and Lenny Fukshansky}


\begin{thebibliography}{1}

\bibitem{borevich_shafarevich}
Z.~I. Borevich and I.~R. Shafarevich.
\newblock Number theory.
\newblock Academic Press Inc., 1966.

\bibitem{davenport_AA_1936}
H.~Davenport.
\newblock Note on a result of {Siegel}.
\newblock {\em Acta Arithmetica}, 2:262--265, 1937.

\bibitem{german_lattice_exponents_spectrum}
O.~N. German.
\newblock Linear forms of a given {Diophantine} type and lattice exponents.
\newblock {\em Izvestiya: Mathematics}, 84(1):3--22, 2020.

\bibitem{schmidt_DA}
W.~M. Schmidt.
\newblock {\em Diophantine approximation}, volume 785 of {\em Lect. Notes Math.}
\newblock Springer, Cham, 1980.

\bibitem{wang_yu}
Y.~Wang and K.~Yu.
\newblock A note on some metrical theorems in {Diophantine} approximation.
\newblock {\em Chinese Annals of Mathematics}, 2:1--12, 1981.

\end{thebibliography}
\end{document}